\newtheorem{theorem}{Theorem}[section]                                          
\newtheorem{proposition}[theorem]{Proposition}                          
\newtheorem{lemma}[theorem]{Lemma}
\newtheorem{definition}[theorem]{Definition}
\newtheorem{remark}{Remark}[section]
\newtheorem{proof}{Proof}
\def\T+{{\mathbb T_d^+}}
\def\o{{\bf 0}}
\newcommand{\cam}[2]{\ensuremath{#1 \stackrel {\textit {\scriptsize {a.p.}}}{\longrightarrow} #2}}
\begin{document}

\begin{frontmatter}

\title{On the existence of accessibility in a tree-indexed percolation model}

\author[CFC]{Cristian F. Coletti}
\ead{cristian.coletti@ufabc.edu.br}
\address[CFC]{Centro de Matem\'atica, Computa\c{c}\~ao e Cogni\c{c}\~ao - Universidade Federal do ABC (CMCC-UFABC),\\ Av. dos Estados, 5001, Bangu, Santo Andr\'e, SP, Brasil}

\author[rjg]{Renato J. Gava}
\ead{gava@ufscar.br}
\address[rjg]{Departamento de Estat\'istica, Universidade Federal de S\~ao Carlos (DEs-UFSCar),\\ Rodovia Washington Luiz, km 235, CEP 13565-905, S\~ao Carlos, SP, Brasil}

\author[PMR]{Pablo M. Rodr\'iguez\corref{cor1}}
\ead{pablor@icmc.usp.br}
\address[PMR]{Instituto de Ci\^encias Matem\'aticas e de Computa\c{c}\~ao, Universidade de S\~ao Paulo (ICMC-USP),\\ Av. Trabalhador s\~ao-carlense 400, Centro, CEP 13560-970, S\~ao Carlos, SP, Brasil}

\cortext[cor1]{Corresponding author}

\begin{abstract}
We study the accessibility percolation model on infinite trees. The model is defined by associating an absolute continuous random variable $X_v$ to each vertex $v$ of the tree. The main question to be considered is the existence or not of an infinite path of nearest neighbors $v_1,v_2,v_3\ldots$ such that 
$X_{v_1}<X_{v_2}<X_{v_3}<\cdots$ and which spans the entire graph. The event defined by the  existence of such path is called {\it{percolation}}.

We consider the case of the accessibility percolation model on a spherically symmetric tree with growth function given by $f(i)=\lceil (i+1)^ \alpha \rceil$, where $\alpha>0$ is a given constant. We show that there is a percolation threshold at $\alpha_c =1$ such that there is percolation if $\alpha> 1$ and there is absence of percolation if $\alpha \leq 1$. Moreover, we study the event of percolation starting at any vertex, as well as the continuity of the percolation probability function. Finally, we provide a comparison between this model with the well known $F^{\alpha}$ record model. We also discuss a number of open problems concerning the accessibility percolation model for further consideration in future research.
\end{abstract}

\begin{keyword}
Accessibility Percolation\sep Phase Transition
\end{keyword}

\end{frontmatter}
  



\section{Introduction} \label{intro}


Many percolation models were inspired by physics and developed in order to answer questions of physical and mathematical interest. This summarizes, roughly speaking, the beginning of
a modern theory with interesting rigorous results and a wide range of applications. The standard model of percolation appears as a mathematical model for the first time in the work of
Broadbent and Hammersley in 1957 --- see \cite{BH}. The main purpose of that work was to study how random properties of a porous medium influence the transport of a fluid through it. In order to
accomplish it, a lattice is used to represent the medium, where vertices are associated to pores and bonds to channels, and a family of Bernoulli independent and identically
distributed random variables with parameter $p$ is used to represent when a given channel is open or not to the spread of a fluid. The first question to be considered is the existence
or not of an infinite component of open channels and how this event depends on $p$. The answer gives rise to a result known as phase transition behavior, which guarantees that there exists a nontrivial critical value of the parameter $p$ called $p_c$ such that if $p$ is above $p_c$ the origin belongs to an infinite connected component with positive probability. The reader can find more details about mathematical formulation and main results for the standard percolation
model and related spatial processes in \cite{braga,grimmett,steif}. 

The resulting mathematical theory was quickly developed and became one of the main branches of contemporary probability. On the other hand, new percolation models appear in the
literature as an alternative to understand issues of biological and physical interest. Some instances are the $AB$ percolation, invasion percolation, oriented percolation, and continuum percolation models, among many others --- for more references see \cite{grimmett,roy}. As a theoretical tool, combined with coupling techniques, percolation theory is also useful in the construction and analysis of stochastic processes \cite{CS, durrett, galves, schinazi}.

The accessibility percolation model in trees was introduced in \cite{NK} inspired by questions from evolutionary biology. In that work an $n$-tree is considered and a continuous random variable $X_v$ is associated to each vertex $v$, independently of everything else. One of the main questions in this model is whether there exists a path of nearest neighbors $v_1,v_2,v_3\ldots$ such that
$$
X_{v_1}<X_{v_2}<X_{v_3}<\cdots
$$

\noindent with positive probability. This type of path is called {\it{accessible path}}. In \cite{NK} the authors derived an asymptotic result for the probability of having at least one accessible path connecting the root with the $kth$-level of an $n(k)$-tree, with $n(k):=\alpha\, k$, and $\alpha$ some arbitrary positive constant. Thus, they proved the existence of a percolation threshold for this model as $k\rightarrow \infty$. Indeed, they showed that the probability of having at least one accessible path goes to zero for $\alpha < \alpha_c$ and converges to a positive number for $\alpha > \alpha_c$ with $1/e \leq \alpha_c \leq 1$. Later, this result was complemented in  \cite{roberts} where the authors showed that this probability converges to 1 for $\alpha > 1/e$. Recently, a related problem was analyzed in the hypercube in \cite{BBZ,lili}.

In this work we establish some properties of the accessibility percolation model on spherically symmetric trees. It is a well known fact that the study of tree-indexed stochastic processes is of interest in understanding evolutionary biological questions. An instance of such a process is obtained by considering a phylogenetic or evolutionary tree, with vertices representing species and edges representing evolutionary relationships among them, and associating a (random) fitness value to each individual. Our model may be seen as a simple stochastic model for evolutionary trees. Different levels of the spherically symmetric tree represent different {\it{generations}} of species and the degree of each vertex represents the number of {\it{offspring}} species which appear in fixed intervals of time. In this sense, the varying environment of the tree is translated as a varying mutation rate. Related stochastic models for phylogenetic trees are proposed in \cite{fabio/herve/rinaldo,liggett/rinaldo}. In \cite{liggett/rinaldo} the authors propose a model which considers a birth and death component and a fitness component. Then, depending on the value of the (constant) mutation rate, they obtain a phylogenetic tree consistent with an influenza tree and also with an HIV tree.

 The paper is organized as follows. Section \ref{model} gathers the formal notations and definitions of the model and states some basic results. A discussion about the phase transition of the model is included in Section \ref{phase} and Section \ref{martingale} proposes a martingale approach to deal with this type of models. Indeed, such approach constitutes an alternative to the methods previously used in the literature to analyse the accessibility percolation model. Last section is devoted to a connection with the theory of records.


\section{The model and basic results}\label{model} 


\subsection{Trees}


We consider an infinite, locally finite, rooted tree $T = (\mathcal{V}, \mathcal{E})$. We denote the root of $T$ by ${\bf{0}}$. Here $ \mathcal{V} $ stands for the set of vertices and
$\mathcal{E} \subset \{\{u,v\}: u,v \in \mathcal{V}, u \neq v\}$ stands for the set of edges. If $\{u,v\}\in \mathcal{E}$, we say that $u$ and $v$ are neighbors, which is denoted by $u\sim v$. The degree of
a vertex $v$, denoted by $d(v)$, is the number of its neighbors. A path in $T$ is a finite sequence $ v_0, v_1, \dots, v_n $ of distinct vertices such that $ v_i \sim v_{i+1} $ for
each $ i $. Since $T$ is a tree, there is a unique path connecting any pair of distinct vertices $u$ and $v$. Therefore we may define the distance between them, which is denoted by
$d(u,v) $, as the number of edges in such path. For each $v\in \mathcal{V}$ define $|v|:=d({\bf{0}},v)$.

For $ u,v \in \mathcal{V} $, we say that $u\leq v$ if $u$ is one of the vertices of the path connecting ${\bf{0}}$ and $v$; $u<v$ if $u\leq v$ and $u\neq v$. 
We call $v$ a \textit{descendant} of $u$ if $u\leq v$ and denote by $T^u = \{v \in \mathcal{V}: u \leq v\}$ the set of descendants of $u$. On the other hand, $v$ is said to be a \textit{successor} of $u$ if $u\leq v$ and $u \sim v$. For $n\geq 1$, we denote by $\partial T_n$ the set of vertices at distance $n$ from the root. That is, $\partial T_n= \{v \in \mathcal{V}: |v|=n\}$.

In this work we deal with spherically symmetric trees (SST) which are trees where the degree of any vertex depends only on its distance from the root. In other words, for any 
$v \in \mathcal{V}$ we have $d(v)=f(|v|)$ where $f:=(f(i))_{i\geq 0}$ is a given sequence of natural numbers. The function $f$ is called the growth function of the tree. Such trees
will be denoted by $T_f$. We point out that there is no more information in $T_f$ than that contained in the sequence $(|\partial T_n |)_{n\geq 1}$.


\subsection{The accessibility percolation model} 


Let $T=(\mathcal{V}, \mathcal{E})$ be a SST. To each $v \in \mathcal{V}$ we associate a random fitness value. More precisely, let $\mathcal{X} =
\{X_v : v \in \mathcal{V}\}$ be a family of absolutely continuous, independent identically distributed (iid) and non-negative random variables assuming their values in a common set $S$. Denote by 
$\mathcal{Q}$ its common law defined in some $\sigma$-algebra $\mathcal{H}$. Now we formally describe the probability space $(\Omega, \mathcal{F},\mathbb{P})$ where all our analysis is
developed. Let $\Omega = S^{\mathcal{V}}$ be the space of ordered sequences of real numbers $\omega =(\omega_v)_{v \in \mathcal{V}}$ where $\omega_v \in S$ for each $v$. $\Omega$ is supplied with the corresponding product $\sigma$-algebra and denote it by $\mathcal{F}$. The probability measure $\mathbb{P}$ is the product measure in $(\Omega , \mathcal{F})$ with one-dimensional marginals given by $\mathcal{Q}$.

In order to formalize what percolation means in our context and state the results, we need some extra definitions. A path $v_0, v_1,\ldots,v_n$ in $T$ is said to be an accessible
path if $X_{v_0} < X_{v_1} < X_{v_2} < \cdots < X_{v_n}$. We denote by $\cam{v_0}{v_n}$ the event that $v_n$ is connected to $v_0$ through an accessible path (see Figure 1).

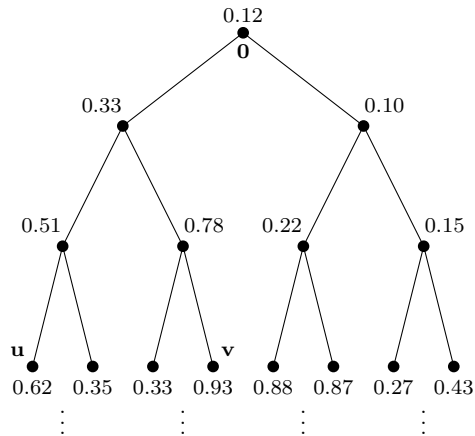
\begin{figure}[h]\label{ap}

\begin{center}
\begin{tikzpicture}[scale=0.8]


\draw [very thick] (0,-0.45) circle (2pt); 
\filldraw [black] (0,-0.45) circle (2pt); \draw (0,-0.15) node[font=\footnotesize] {$0.12$}; \draw (0,-0.75) node[font=\footnotesize] {${\bf 0}$};

\draw [very thick] (2,-2) circle (2pt); 
\filldraw [black] (2,-2) circle (2pt); \draw (2.35,-1.65) node[font=\footnotesize] {$0.10$};

\draw [very thick] (-2,-2) circle (2pt);
\filldraw [black] (-2,-2) circle (2pt); \draw (-2.35,-1.65) node[font=\footnotesize] {$0.33$};

\draw [very thick] (3,-4) circle (2pt);
\filldraw [black] (3,-4) circle (2pt); \draw (3.35,-3.65) node[font=\footnotesize] {$0.15$};
\draw [very thick] (1,-4) circle (2pt);
\filldraw [black] (1,-4) circle (2pt); \draw (0.65,-3.65) node[font=\footnotesize] {$0.22$};
\draw [very thick] (-3,-4) circle (2pt);
\filldraw [black] (-3,-4) circle (2pt); \draw (-3.35,-3.65) node[font=\footnotesize] {$0.51$};
\draw [very thick] (-1,-4) circle (2pt);
\filldraw [black] (-1,-4) circle (2pt); \draw (-0.65,-3.65) node[font=\footnotesize] {$0.78$};

\draw [very thick] (3.5,-6) circle (2pt);
\filldraw [black] (3.5,-6) circle (2pt); \draw (3.5,-6.35) node[font=\footnotesize] {$0.43$};
\draw [very thick] (1.5,-6) circle (2pt);
\filldraw [black] (1.5,-6) circle (2pt); \draw (1.5,-6.35) node[font=\footnotesize] {$0.87$};
\draw [very thick] (-3.5,-6) circle (2pt);
\filldraw [black] (-3.5,-6) circle (2pt); \draw (-3.5,-6.35) node[font=\footnotesize] {$0.62$}; \draw (-3.75,-5.75) node[font=\footnotesize] {${\bf u}$};
\draw [very thick] (-1.5,-6) circle (2pt);
\filldraw [black] (-1.5,-6) circle (2pt); \draw (-1.5,-6.35) node[font=\footnotesize] {$0.33$};
\draw [very thick] (2.5,-6) circle (2pt);
\filldraw [black] (2.5,-6) circle (2pt); \draw (2.5,-6.35) node[font=\footnotesize] {$0.27$};
\draw [very thick] (0.5,-6) circle (2pt);
\filldraw [black] (0.5,-6) circle (2pt); \draw (0.5,-6.35) node[font=\footnotesize] {$0.88$};
\draw [very thick] (-2.5,-6) circle (2pt);
\filldraw [black] (-2.5,-6) circle (2pt); \draw (-2.5,-6.35) node[font=\footnotesize] {$0.35$};
\draw [very thick] (-0.5,-6) circle (2pt);
\filldraw [black] (-0.5,-6) circle (2pt); \draw (-0.5,-6.35) node[font=\footnotesize] {$0.93$}; \draw (-0.25,-5.75) node[font=\footnotesize] {${\bf v}$};


\draw (0,-0.45) -- (2,-2) -- (3,-4) -- (3.5,-6);
\draw (0,-0.45) -- (-2,-2) -- (-3,-4) -- (-3.5,-6);
\draw (2,-2) -- (1,-4) -- (0.5,-6);
\draw (-2,-2) -- (-1,-4) -- (-0.5,-6);
\draw (1,-4) -- (1.5,-6);
\draw (-1,-4) -- (-1.5,-6);
\draw  (3,-4) -- (2.5,-6);
\draw (-3,-4) -- (-2.5,-6);


\draw (-3,-6.8) node[font=\footnotesize] {$\vdots$};
\draw (-1,-6.8) node[font=\footnotesize] {$\vdots$};
\draw (1,-6.8) node[font=\footnotesize] {$\vdots$};
\draw (3,-6.8) node[font=\footnotesize] {$\vdots$};

\end{tikzpicture}
\end{center}
\caption{A realization of the accessibility percolation model on a tree. 
In this example $u$ and $v$ are the only vertices, at distance $3$ from the root, which are connected to ${\bf 0}$ through an accessible path.}
\end{figure}

For each $n \in \mathbb{N}$, let $\Lambda_n$ be the event that $\partial T_n$ is accessible from the root. In other words,

\[
\Lambda_n := \Lambda_n (T)= \{ \cam{{\bf{0}}}{v}, \ \mbox{for some} \ v \in \partial T_n \}.
\]

In addition let $N_n$ be the number of accessible paths connecting the root with the $nth$-level of the tree, that is, $N_n := |\{v\in \partial T_n:\cam{{\bf{0}}}{v} \}|$.
 
We call this model the accessibility percolation model on $T$, and we denote it by $\mathcal{AP}(T,\mathcal{X})$.

\begin{definition}
We say that there is percolation in $\mathcal{AP}(T,\mathcal{X})$ if the event $\displaystyle \cap_{n \in \mathbb{N}} \Lambda_n$ occurs with positive probability.
\end{definition}

The main object of study is the percolation probability $\theta(T,\mathcal{X})$ given by
\[
\theta(T,\mathcal{X})=\mathbb{P}[\displaystyle \cap_{n \in \mathbb{N}} \Lambda_n].
\]
Since $\mathcal{X}$ is a family of iid random variables, the value of $\theta(T,\mathcal{X})$ does not
depend on the distribution of $\mathcal{X}$. Thus, we can safely remove $\mathcal{X}$ from the notation for $\theta$. Indeed, the probability of having an accessibility path of length $n$ equals $1/(n+1)\,!$

\begin{remark}\label{limprob}
Since $(\Lambda_n)_{n \in \mathbb{N}}$ forms a decreasing sequence of events, then $\theta(T):=\mathbb{P}(\displaystyle \cap_n \Lambda_n) = \displaystyle \lim_{n \rightarrow \infty}
\mathbb{P}(\Lambda_n)$.
\end{remark}

It is useful to compare, in some way, accessibility percolation models corresponding to different trees. Let $T_1 = (\mathcal{V}_1, \mathcal{E}_1)$ and $T_2= (\mathcal{V}_2, \mathcal{E}_2)$ be two trees with
common root ${\bf{0}}$. We say that $T_1$ is dominated by $T_2$ and write $T_1 \prec T_2$ if $\mathcal{V}_1 \subset \mathcal{V}_2$ and $\mathcal{E}_1 \subset \mathcal{E}_2$. If $T_1 \prec T_2$, then a simple coupling argument shows that every accessible path connecting ${\bf{0}}$ to $\partial T_{1_n}$ is also an accessible path connecting ${\bf{0}}$ to $\partial T_{2_n}$. This, in turns, implies that $\theta(T)$ is a non-decreasing function in $T$.  Coupling is a well known technique in percolation theory; we refer the reader to \cite[Chapter 3]{grimmett2} for an example of application in the bond percolation model. 

As a consequence of the previous remarks, the following result holds.

\begin{lemma}\label{lema}
If $T_1 \prec T_2$, then $\theta(T_1)\leq \theta(T_2)$.
\end{lemma}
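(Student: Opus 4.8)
The plan is to realize the accessibility percolation models on $T_1$ and on $T_2$ simultaneously on a single probability space, deduce a pointwise inclusion of the percolation events, and then pass to probabilities. Concretely, I would take $\Omega = S^{\VV_2}$ with the product $\sigma$-algebra and the product measure $\mathbb{P}$ having one-dimensional marginals $\mathcal{Q}$, and let $\mathcal{X} = \{X_v : v \in \VV_2\}$ be the coordinate family; this defines the model on $T_2$. The restricted family $\{X_v : v \in \VV_1\}$ is again iid with law $\mathcal{Q}$, hence a legitimate realization of the model on $T_1$; and since, as remarked after the definition of $\theta$, the percolation probability does not depend on $\mathcal{Q}$, nothing is lost by working with this particular coupling.

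First I would record that distances to the root are not changed by passing from $T_1$ to $T_2$. Indeed, for $v \in \VV_1$ the $T_1$-path from $\o$ to $v$ has all its edges in $\EE_1 \subseteq \EE_2$, hence it is also a path in $T_2$; since $T_2$ is a tree, it must coincide with the unique $T_2$-path from $\o$ to $v$, so $\dist(\o,v)$ is the same in both trees and therefore $\partial T_{1_n} \subseteq \partial T_{2_n}$ for every $n$. Next, fix $n$ and a realization for which $\Lambda_n(T_1)$ occurs: there are $v \in \partial T_{1_n}$ and a path $\o = v_0, v_1, \ldots, v_n = v$ in $T_1$ with $X_{v_0} < X_{v_1} < \cdots < X_{v_n}$. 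By $\EE_1 \subseteq \EE_2$ this same vertex sequence is a path in $T_2$, the fitness values along it are literally the same random variables, and $v \in \partial T_{2_n}$; hence $\cam{\o}{v}$ holds in $T_2$, so $\Lambda_n(T_2)$ occurs. This gives $\Lambda_n(T_1) \subseteq \Lambda_n(T_2)$ for every $n$.

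Intersecting over $n$ yields $\bigcap_n \Lambda_n(T_1) \subseteq \bigcap_n \Lambda_n(T_2)$, and applying $\mathbb{P}$ gives
\[
\theta(T_1) = \mathbb{P}\Bigl( \bigcap_n \Lambda_n(T_1) \Bigr) \leq \mathbb{P}\Bigl( \bigcap_n \Lambda_n(T_2) \Bigr) = \theta(T_2),
\]
which is the claim. There is no genuinely difficult step here; the only point needing a little care is the observation that $\EE_1 \subseteq \EE_2$ together with the tree structure of $T_2$ forces the two notions of ``level $n$'' to agree on $\VV_1$, so that an accessible path reaching $\partial T_{1_n}$ really certifies $\Lambda_n(T_2)$ and not $\Lambda_m(T_2)$ for some unrelated $m$. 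Once that is in place, the rest is a routine monotonicity argument via the coupling.
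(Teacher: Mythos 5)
Your proof is correct and follows essentially the same route as the paper, which disposes of the lemma by exactly this coupling: realize both models on one probability space and observe that any accessible path from $\o$ to $\partial T_{1_n}$ is also an accessible path from $\o$ to $\partial T_{2_n}$, giving $\Lambda_n(T_1)\subseteq\Lambda_n(T_2)$ and hence $\theta(T_1)\leq\theta(T_2)$. Your extra remark that $\EE_1\subseteq\EE_2$ and uniqueness of paths in $T_2$ force the two notions of level to agree on $\VV_1$ is a correct and worthwhile elaboration of the detail the paper leaves implicit.
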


Lemma \ref{lema} above is general enough for our purposes. However, it is possible to obtain a similar result for a more general comparison between accessibility percolation models related to different trees. Let $T_1 = (\mathcal{V}_1, \mathcal{E}_1)$ and $T_2= (\mathcal{V}_2, \mathcal{E}_2)$ be two trees with roots ${\bf{0}}_1$ and ${\bf{0}}_2$, respectively. We say that $T_1$ is comparable to $T_2$ and write $T_1 \prec_s T_2$ if there is an isomorphism between $T_1$ and a subtree $T_2^v$ of $T_2$ rooted at $v\in \mathcal{V}_2$. Note that  $T_2^v$ belongs to the subtree of $T_2$ formed by the descendants of $v$. Then, a simple coupling argument allows us to conclude the following result. 

\begin{lemma}\label{lema2}
If $T_1 \prec_s T_2$, then $\theta(T_1)>0$ implies $\theta(T_2)>0$.
\end{lemma}

\section{Phase transition}\label{phase}
Let $T_{!}$ denotes the factorial tree whose growth function is given by $f(i)=i+1$, $i\geq 0$. Note that 
$$
\mathbb{P}(\Lambda_{n})\leq \frac{|\partial T_{!,n}|}{(n+1)!} = \frac{1}{n+1}.
$$
By letting $n \rightarrow \infty$ (see Remark \ref{limprob}), we have $\theta(T_{!})=0$. Therefore, Lemma \ref{lema} shows the necessity of considering trees with a superlinear growth function in order to establish the existence of percolation. This behavior is not surprising since the accessibility percolation model {\it{penalizes}} long accessibility paths. Thus, we turn our attention to SST $T_{\alpha}$ with growth function given by $f(i)=\lceil(i+1)^\alpha\rceil$, where $\alpha>0$ is a constant. By Lemma \ref{lema}, $\theta(\alpha):=\theta(T_{\alpha})$ is non-decreasing in $\alpha$ and the critical parameter $\alpha_c :=\inf\{\alpha : \theta(\alpha)>0\}$ is well defined. The next result localizes the exact point where phase transition occurs for the accessibility percolation model on $T_{\alpha}$. 
 
\begin{theorem}\label{teo2}
Consider the $\mathcal{AP}(T_{\alpha},\mathcal{X})$ model. Then, the critical parameter is given by $\alpha_c =1$. More precisely, $\theta(\alpha)=0$ if $\alpha \leq 1$ and $\theta(\alpha)>0$ if $\alpha >1$.
\end{theorem}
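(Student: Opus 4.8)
The plan is to prove the two inequalities by opposite methods: a first–moment/domination argument for $\alpha\le 1$, and a comparison with a branching process in varying environment for $\alpha>1$.

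For $\alpha\le 1$ one has $\lceil(i+1)\alpha\rceil\le i+1$ for every $i$, hence $T_{\alpha}\prec T_{!}$, and Lemma~\ref{lema} together with the computation already carried out for the factorial tree gives $\theta(\alpha)\le\theta(T_{!})=0$. (Equivalently, argue directly: $\mathbb{P}(\Lambda_n)\le |\partial T_{\alpha,n}|/(n+1)!\le n!/(n+1)!=1/(n+1)\to 0$, and conclude by Remark~\ref{limprob}.) This disposes of the subcritical regime.

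For $\alpha>1$ it suffices, by Remark~\ref{limprob}, to bound $\mathbb{P}(\Lambda_n)$ away from $0$ uniformly in $n$. Since $\theta$ does not depend on the law of $\mathcal{X}$, I would take the fitnesses uniform on $[0,1]$ and explore $T_{\alpha}$ level by level. Conditionally on the set of accessible vertices at level $n$ and their fitnesses, the accessible vertices at level $n+1$ arise by independent reproduction: an accessible vertex $v$ at level $n$ with fitness $x$ has $\sim\alpha n$ children, of which those with fitness in $(x,1)$ are accessible, their fitnesses being fresh uniforms on $(x,1)$. Thus $\mathcal{AP}(T_{\alpha},\mathcal{X})$ is governed by a branching process in varying environment (the environment at step $n$ being the deterministic branching number $\lceil(n+1)\alpha\rceil-1$) carrying a fitness label, and the expected number of accessible paths to level $n$ is exactly $|\partial T_{\alpha,n}|/(n+1)!$, which tends to infinity for $\alpha>1$ — in sharp contrast with the bound $\le 1/(n+1)$ of the factorial case. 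The remaining task is to turn this mean growth into a strictly positive survival probability for the comparison process, hence into $\inf_n\mathbb{P}(\Lambda_n)>0$.

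This last step is where the real difficulty lies, and is the step I expect to require the most care. Two features make it delicate: at small $n$ the branching number is tiny, so the process is near-critical and must clear a bottleneck; and at large $n$ the finiteness of the fitness range $[0,1]$ forces the per–generation offspring mean below $1$, so the comparison process is \emph{not} mean–supercritical, and neither a plain Galton–Watson domination nor a bare second–moment/Paley–Zygmund estimate on the number of accessible vertices (which is not concentrated here) will do. The route I would take is to pass to a geometrically blocked version of the exploration, grouping $2^{k}$ consecutive levels into the $k$-th block, and to track only a fixed-threshold ``low-fitness core'' of accessible vertices whose fitness never exceeds some $b\in(1/\alpha,1)$. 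A short computation with Stirling's formula shows that the block offspring means then grow super-exponentially in $k$, so the blocked process is strongly supercritical; a favourable configuration at the initial block has positive probability; and one invokes the classical non-extinction criteria for branching processes in varying environment, using that the offspring laws are nearly Binomial — hence with well-controlled second factorial moments — while carefully handling the fitness-label (type) dependence when restricting to the core. Verifying these non-extinction estimates, and in particular reconciling the growing branching with the finite fitness budget, is the technical heart of the proof.
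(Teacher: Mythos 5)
Your subcritical half is complete and is exactly the paper's argument: for $\alpha\le 1$ one has $T_\alpha\prec T_!$, and Lemma~\ref{lema} together with the first-moment bound $\mathbb{P}(\Lambda_n)\le 1/(n+1)$ and Remark~\ref{limprob} gives $\theta(\alpha)=0$. The problem is the supercritical half: after setting up the level-by-level exploration and the first-moment computation you explicitly defer the only step that would constitute a proof --- converting mean growth into a survival probability bounded away from zero --- to an unverified programme (``the technical heart''), so as written nothing is actually proved for $\alpha>1$. Moreover, two of the claims guiding that programme are shaky. The assertion that the finite fitness range forces the per-generation offspring mean below $1$ is false in the relevant annealed sense: with $N_n$ the number of accessible vertices at distance $n$, one has $\mathbb{E}(N_{n+1})/\mathbb{E}(N_n)=\lceil(n+1)\alpha\rceil/(n+2)\to\alpha>1$; the genuine obstacle is that the offspring law depends on the parent's fitness (a type), not that the means dip below one. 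And the fixed-threshold ``low-fitness core'' buys nothing: since $\theta$ is distribution free, discarding vertices with fitness above $b$ simply reproduces the same accessibility problem on a Bernoulli-thinned tree with effective slope $b\alpha$, and within the core the fitness still accumulates toward the threshold, so the ``finite budget'' issue you introduced it to cure reappears unchanged.

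The paper's route is simpler and does not need your doubling blocks: fix a constant block length $n$ and let $Z_j^n$ count the vertices at depth $jn$ reached by concatenating within-block accessible paths (Lemma~\ref{P: GWBP}); this is a Galton--Watson process in varying environment whose survival implies percolation, because a concatenation of increasing block paths is increasing. Its mean offspring in generation $j$ is $|\partial T_n^v|/(n+1)!$ with $|v|=jn$, which is bounded below by $\alpha^n/(n+1)$ and in fact grows with $j$; hence for $n$ large the process is uniformly supercritical, which disposes at once of your worries about late generations and about an initial bottleneck. Survival with positive probability then follows from Theorem~1 of \cite{SGWP}, the required domination of the offspring-to-mean ratio being supplied by the constant $(n+1)!$. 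This fixed-$n$ block comparison plus the D'Souza--Biggins criterion is precisely the step missing from your proposal; the geometric blocking and the fitness-threshold core are unnecessary and, as sketched, not evidently sound.
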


\begin{proof}
The first statement of Theorem \ref{teo2}, i.e. the subcritical regime, is a consequence of Lemma \ref{lema} and the fact that $\theta(T_{!})=0$. The second part, i.e. the supercritical regime, may be obtained as a consequence of Theorem 3.3 in \cite{BRZ}. In \cite{BRZ} the authors give sufficient conditions under which a Galton-Watson process in varying environment with selection (GWVES) survives. The Galton-Watson process in varying environment is a branching process where the offspring distribution may differ from generation to generation. The selection procedure assumed in \cite{BRZ} is the following: each new born individual has associated a fitness selected from a i.i.d. family of continuous time random variables, and each individual survives only when its fitness is greater than its parent's fitness. Thus defined, there is a one-to-one correspondence between GWVES and accessibility percolation models on random trees. In our case, consider first a branching process assuming that each individual from the $ith$-generation has, before selection, $\lceil(i+1)^\alpha\rceil$ offspring at generation $i+1$, for any $i=0,1,2,\ldots$ Assuming that selection is performed according to i.i.d. uniform random variables, we have that a realization of this GWVES process coincides with a realization of the $\mathcal{AP}(T_{\alpha},\mathcal{X})$ model. By Theorem 3.3 in \cite{BRZ} we know that this GWVES survives provided
$$\sum_{i=1}^{\infty}\frac{1}{\lceil(i+1)^\alpha\rceil}<\infty,$$
which occurs whenever $\alpha>1$. The main ideia behind the obtention of the previous condition is the interpretation of the GWVES as a particular case of a branching random walk (BRW). In this case, the fitness of each particle in the GWVES is interpreted as the position of the respective particle in the BRW. We refer the reader to \cite{BRZ} for a more detailed explanation of such a comparison and a review of sufficient conditions for survival of BRWs.
\end{proof}

\begin{remark}\label{remarkop1}
Theorem \ref{teo2} concerns the accessibility percolation model on a special class of spherically symmetric trees. It still is an open problem to find sufficient and necessary conditions, for any growth function $(f(i))_{i\geq 0}$, under which there is, or not, percolation. In this direction, Theorem 3.3 in \cite{BRZ} may be understood as saying that a sufficient condition for the existence of percolation, on a tree with growth function given by $(f(i))_{i\geq 0}$, is 
$$\sum_{i=0}^{\infty}\frac{1}{f(i)}<\infty.$$  
On the other hand, since $\mathbb{P}(\Lambda_{n})\leq |\partial T_{n}|/(n+1)!$ we have that a sufficient condition for the absence of percolation, on the same tree, is
\[
\liminf_{n\to \infty}\frac{\prod_{i=0}^{n-1}f(i)}{(n+1)!}=0.\]  
Notice that the previous conditions do not apply to the case $f(i)=\lceil \alpha (i+1)\rceil$, for $i\geq 0$, and $\alpha >1$.
\end{remark}

\begin{remark}
We also observe that the second moment method applied in the model of \cite{NK,roberts} for controlling the number of accessible paths, $N_n$, connecting the root with the $nth$-level of the tree does not work here. This is because, in our case, $\mathbb{E}(N_n^2)$ grows much faster than $\mathbb{E}(N_n)^2$. 
\end{remark}

\begin{theorem}
Consider the $\mathcal{AP}(T_{\alpha},\mathcal{X})$ model. If $\alpha < 1$, then there is no percolation in $T_{\alpha}^v$ for any $v \in T_{\alpha}$. If
$\alpha > 1$, then the event $A=\{$there is percolation starting from $v \in T_{\alpha}$ i.o.$\}$ has probability one. 
\end{theorem}

\begin{proof}
Suppose that $\alpha < 1$. Consider a vertex $v \in \partial T_{\alpha,i}$. Let us show that the probability of existing an infinite accessible path starting from $v$ is zero. For each $n \in \mathbb{N}$, let $\Lambda_n^{v}$ be the event that $\partial T_{\alpha, n}^v$ is accessible from $v$. 
More explicitly, 
\begin{align*}
\Lambda_n^{v} := \{ \cam{{v}}{u}, \ \mbox{for some} \ u \in \partial T_{\alpha, n}^v \}.
\end{align*}
Now choose $\varepsilon \in (0, 1)$ and $n_{\varepsilon}$ such that $n \geq n_{\varepsilon}$ implies
\begin{align*}
\frac{(i+n)^\alpha + 1}{n} < \varepsilon . 
\end{align*}
Then for $n \geq n_{\varepsilon}$
\begin{align*}
\mathbb{P}(\Lambda_{n}^{v}) & \leq \frac{\prod_{j=1}^{n}\lceil (i + j)^\alpha\rceil}{(n+1)!} \\
& \leq \frac{\prod_{j=1}^{n_{\varepsilon}}\lceil (i + j)^\alpha\rceil}{n_{\varepsilon}!} \varepsilon^{n-n_{\varepsilon}+1} 
\to 0 \; \text{ as } \; n \to \infty,
\end{align*}
that is, there is no percolation starting from $v$ with probability one. Since $T_{\alpha}$ has a countable 
number of vertices the first claim follows.

Now assume that $\alpha > 1$. We will prove the second claim by construction. 
Consider a vertex $v_2 \in \partial T_{\alpha,2}$ and define $\Lambda^{v_2}$ to be the event there is accessibility  
percolation in $T_{\alpha}^{v_2}$. Since the sequence $\left(\lceil\left(i+1\right)^{\alpha}\rceil\right)_i$ is non-decreasing, a simple coupling argument (see Lemma 2.2) yields
$\mathbb{P}(\Lambda^{v_2}) \geq \theta(\alpha) > 0$. Then take a vertex $v_3 \in \partial T_{\alpha,3}$ which is not a successor of $v_2$, and define the event $\Lambda^{v_3}$ in a similar way. Again $\mathbb{P}(\Lambda^{v_3}) \geq \theta(\alpha)$. Observe that $\Lambda^{v_2}$ and $\Lambda^{v_3}$ 
are independent since they are defined in terms of two disjoint families of independent random variables.

Suppose we have defined $v_2, v_3, \ldots, v_n$ and suppose we have defined the events $\Lambda^{v_2}, \Lambda^{v_3}, \ldots, \Lambda^{v_n}$; then we define inductively $v_{n+1} \in \partial T_{\alpha, n+1}$ in such a way it is not a successor of $v_2, \ldots, v_n$. Then $\mathbb{P}(\Lambda^{v_{n+1}}) \geq \theta(\alpha)$ and the events 
$\Lambda^{v_2}, \Lambda^{v_3}, \ldots , \Lambda^{v_{n+1}}$ are independent. Then, $\{ \Lambda^{v_n} \}_{n \geq 2}$ is a sequence of independent 
events such that 
\begin{align*}
\sum_{n=2}^{\infty} \mathbb{P}(\Lambda^{v_n}) = \infty.
\end{align*}
It follows from the Borel-Cantelli lemma that $\mathbb{P}(A)=1$.

\end{proof}

\begin{proposition}\label{rightcont}
The percolation function $\theta$ is right continuous on $[0, \infty) \setminus \{ 1, 2, 3 , \ldots \}$.
\end{proposition}
\begin{proof}
Let
\begin{align*}
\theta_{n}(\alpha) := \mathbb{P}(N_n \geq 1) \; \text{ for } n \geq 1.
\end{align*}
We observe that $\theta_n$ is a continuous function at $\alpha \in [0, \infty) \setminus \{ 1, 2, 3 , \ldots \}$, 
since the event $\{ N_n \geq 1 \}$ depends on the vertices at distance $\leq n $ from the root: if we choose 
$\alpha_0$ sufficiently close to $\alpha$ so that $\lceil (1+i)^{\alpha_0} \rceil = \lceil (1+i)^{\alpha} \rceil$ 
for all $i \in \{0, 1, \ldots , n-1 \}$, then $\partial T_{\alpha, i} = \partial T_{\alpha_0, i}$ for all $i \in \{1, \ldots , n  \}$ and $\theta_n(\alpha) = \theta_n(\alpha_0)$. Furthermore, $\theta_n$ is non-increasing at 
$n$ and 
\begin{align*}
\theta(\alpha) = \inf_n\theta_{n}(\alpha) =  \lim_{n \to \infty} \theta_n(\alpha),
\end{align*}
which implies that $\theta$ is upper semicontinuous. Note next that a non-decreasing upper 
semicontinuous function is a right continuous function.
\end{proof}

\begin{remark}
In our model, in contrast to what happens in the model considered in \cite{NK,roberts}, the percolation 
probability in the supercritical regime is strictly less than $1$. In order to see that, call $u$ the unique 
neighbor of root ${\bf 0}$ and notice that $u$ has $\lceil 2^{\alpha} \rceil$ neighbors at distance 2 from the root. 
Call these neighbors $v_1, \ldots, v_{\lceil 2^\alpha \rceil}$. If the event
$\left\{X_{\o} > X_u \right\} \cup  \left\{ X_{u} > \max\{X_{v_1}, \ldots , X_{v_{\lceil 2^\alpha \rceil}} \} \right\}$ 
occurs then there is absence of percolation. Therefore,
\begin{align*}
\theta(\alpha) \leq 1 - \mathbb{P}(\left\{X_{\o} > X_u \right\} \cup X_{u} > \max\{X_{v_1}, \ldots , X_{v_{\lceil 2^\alpha \rceil}} \}) 
= \frac{\lceil 2^\alpha \rceil}{2(2 + \lceil 2^\alpha \rceil)},
\end{align*}
which goes to $1/2$ as $\alpha \to \infty$.
\end{remark}

\section{Records}\label{records}
The accessibility percolation model was motivated by biological questions. However, this model is also related to record theory. In order to compare both models we assume, without loss of generality, that accessibility paths are defined by considering random variables in decreasing order. Note that this does not change our results. Now consider a sequence of competitions such that on the $n$-th edition there are $\alpha(n)$ participants where $\alpha(1)=1$. Assume that the scores obtained in the different editions are given by a sequence $Y_n=(Y_1^n, \ldots , Y_{\alpha(n)}^n)$, $n=1,2,\ldots$, of iid random vectors composed by absolutely continuous, iid random variables, each having continuous cumulative distribution function (cdf) $F$. Then, the score of the winner of the $n$-th edition is given by $X_n = \max \{Y_1^n, \ldots , Y_{\alpha(n)}^n \}$ and its cdf  by $F_n(x) = F(x)^{\alpha(n)}$. So defined, this is the well known $F^{\alpha}$ record model, also referred  as the $F^{\alpha}$ scheme in \cite{nevzorov}. See Chapter 6, \cite{arnold}, for a review of results of $F^{\alpha}$ record models.

Now we define record time sequence $(S_n)_{n\geq 1}$ as follows. Let $S_0 = 1$ and, for $n \geq 1$, let $S_n = \inf \{j : X_j > X_{S_{n-1}}\}.$ The (asymptotic) distribution of these random variables, as well as the one of the inter-records $S_n - S_{n-1}$, are well studied in record theory. A different question that arises is if it is possible to have a record at each edition of the competition with positive probability. In other words, is it possible to have $\mathbb{P}\left(\cap_{n=1}^{\infty}\{S_n = n\}\right)> 0?$ Theorem \ref{teo2} is related to the answer of this question provided $\alpha(n)= \lceil(n+1)^\alpha\rceil$. Indeed, by a coupling argument, the event of observing a record at each edition of the competition implies the event of percolation in our model, which occurs if, and only if, $\alpha >1$. 

\section{Discussion}\label{martingale}

In this work we establish new properties of the accessibility percolation model on trees. Nevertheless, some open problems remain. One open question is related to Remark \ref{remarkop1}. If the accessibility percolation model is defined in a generic spherically symmetric tree, then it would be interesting to prove that there is phase transition and that the corresponding critical parameter is a function of the growth function of the underlying tree. In particular, as we point out in Remark \ref{remarkop1}, there is a entire family of trees where neither the second moment method used in \cite{NK}, nor the techniques used in this work apply. Therefore new methods must be explored, possibly supported by numerical simulations, to solve this question.

On the other hand, once we know that there is phase transition on a given tree, it would be interesting to describe the asymptotic behavior of the number of accessible paths. In this work, we consider the $\mathcal{AP}(T_{\alpha},\mathcal{X})$. If $N_{n}$ be the number of accessible paths connecting the root to $\partial T_{\alpha,n}$ then, by Theorem \ref{teo2}, we have
\begin{align*}
\lim_{n\to \infty}\mathbb{P}(N_n \geq 1) \left\{
\begin{array}{ll}
=0,& \text{if }\alpha \leq 1 \\
>0, &\text{if }\alpha > 1.
\end{array}\right.
\end{align*}

One possible approach is to study the asymptotic behavior of the random variable $N_n$ through martingales. Notice that
$$
N_{n} = \sum_{i=1}^{|\partial T_{\alpha,n}|}I_{i},
$$
where $\left\{ I_{i} \right\}_{i}$ is a sequence of Bernoulli r.v.'s which are not independent with parameter $1/n\,!$ Set 
\begin{align}\label{beta}
\alpha_{n} := \dfrac{\lceil n^\alpha\rceil}{n+1} \; \text{ and } \;
\beta_n := \prod_{i=1}^{n}\alpha_i \text{ for } n \geq 1,
\end{align}
and note that
\begin{align*}
\mathbb{E}(N_{n}) = \frac{|\partial T_{\alpha,n}|}{(n+1)!} = \prod_{i = 1}^{n} \alpha_i = \beta_n.
\end{align*}
Put $M_{0} =1$ and define 
\begin{align*}
M_{n} := \frac{N_{n}}{\beta_n} \text{ for } n \geq 1. 
\end{align*}
A straightforward computation shows that $\{ M_{n} \}_{n \geq 0}$ is a non-negative martingale with respect to the filtration $\left\{ \mathcal{F}_n \right\}_{n \geq 1}$ given by
$\mathcal{F}_n = \sigma(X_v; v \in T_{\alpha,n})$. It follows from the theory of martingales that the sequence $\left(M_n\right)_n$ converges a.s. to a non-negative random variable $M$ with $\mathbb{E}(M) \leq 1$. Since $\mathbb{E}(M_{n}^{2})$ is not uniformly bounded, we can not use the $\mathcal{L}^{2}$ martingale convergence theorem to conclude that $\mathbb{P}(M > 0) > 0$; however if we manage to prove it, then we may conclude that for any realization $\omega$ of the model where $M(\omega) >0$
\[
N_n(\omega) \sim \beta_n M(\omega).
\]
Here $f(n) \sim g(n)$ stands for $\lim_{n \rightarrow \infty} f(n)/g(n)=1$.

Another question to be mentioned concerns the continuity of the percolation function on $\left[\alpha_c,+\infty \right)$. Notice that we only prove, see Proposition \ref{rightcont}, that this function is right continuous on $[0, \infty) \setminus \{ 1, 2, 3 , \ldots \}$.  This could be proved, as in other percolation models, by proving that there is just one unbounded connected component.

\section*{Acknowledgements}
This work was supported by FAPESP [grant numbers 2015/20110-0, 2013/03898-8, 2016/11648-0]; and CNPq [grant numbers 461365/2014-6, 479313/2012-1, 304676/2016-0]. We are grateful to Rahul Roy for pointing out to us the relation between records and our model. Thanks are also due to Francisco Rodrigues and Rafael Grisi for fruitful discussions at different stages of the current work, and to the reviewers for their corrections and remarks which helped to improve the presentation of our work. 


\end{document}